\newtheorem{theorem}{Theorem}[section]
\newtheorem{lemma}[theorem]{Lemma}
\newtheorem{proposition}[theorem]{Proposition}
\newtheorem{corollary}[theorem]{Corollary}
\theoremstyle{definition}
\newenvironment{definition}[1][Definition]{\begin{trivlist}
\item[\hskip \labelsep {\bfseries #1}]}{\end{trivlist}}
\newenvironment{remark}[1][Remark]{\begin{trivlist}
\item[\hskip \labelsep {\bfseries #1}]}{\end{trivlist}}
\newcommand{\ds}{\displaystyle}
\DeclareMathOperator{\rank}{Rank}
\title{Introducing the Polylogarithmic Hardy Space}
\author{Joel A. Rosenfeld}
\begin{document}
\maketitle

\section{Introduction}
\label{intro}
This paper studies the Polylogarithmic Hardy space given by $$PL^2 = \left\{ f(z,s) = \sum_{n=1}^\infty a_n \frac{z^n}{n^s} : \sum_{n=1}^\infty |a_n| < \infty\right\}.$$ This space has a strong connection to the polylogarithm $L_s(z) = \sum_{n=1}^\infty z^n n^{-s}$ in that it is a reproducing kernel Hilbert space with kernel functions $K(z,w,s,t) = L_{s+\bar t}(z\bar w)$.

In the past two decades, the study of reproducing kernel Hilbert spaces (RKHSs) has been extended to include the Hardy space of Dirichlet series. That is the space, $\mathcal{H}^2$, of functions that are representable as Dirichlet series with square summable coefficients. This space was introduced by Hedenmalm et al. in \cite{hedenmalm}. In \cite{hedenmalm}, the multiplication operators for this space were characterized as functions expressable as Dirichlet series bounded in the right half plane. Their results were extended by McCarthy in \cite{mccarthy} who studdied the so-called weighted Hardy spaces of Dirichlet series, and McCarthy expressed a weight for which the multiplication operators satisfied the Nevanlinna-Pick Interpolation property \cite{AgMc}. Other notable works in the space are due to Bayart, who studied the composition operators on $\mathcal{H}^2$ in \cite{Bayart}, as well as the study of a family of shift operators by Olofsson in \cite{Olofsson}.

The classical Hardy space $H^2$ has itself been studied since its introduction by Riesz in 1923 \cite{riesz}. This is the space of complex valued function analytic in the disc $\mathbb{D}$ with square summable coefficients. This is the standard example of a RKHS, and details about it can be found in many texts \cite{AgMc,garnett, hoffman}. In particular, the bounded multiplication operators over the Hardy space is characterized as the collection of analytic functions bounded in the disc \cite{AgMc}. Sarason in \cite{sarason} classified the densely defined multiplication operators over the Hardy space as those functions inside of the Smirnov class, $N^+$. This was the first characterization of densely defined multiplication operators over a RKHS.

The study of Toeplitz operators over the Hardy space is also an important area of research. This is the collection of operators where multiplication by an $L^\infty(\mathbb{T})$ function is followed by the projection onto the Hardy space. Bounded Toepliz operators are characterized by their matrix representation, where an operator is Toeplitz iff its matrix representation is constant down each diagonal and the entries are the Fourier series of an $L^\infty$ function. The class of Toeplitz operators arises by recognizing that $H^2$ can be identified with a subspace of $L^2(\mathbb{T})$ through the boundary values of functions in $H^2$.

The polylogarithm has been studied as far back as the time of Leibniz, who studied the dilogarithm, $L_2(z)$, in his letters to Johann Bernoulli hoping to resolve the Basel problem [Mathematische Schriften III]. It has since then found many applications in number theory, analysis, and physics. The study of the properties of the polylogarithm has been carried out over the past century and can be found in the works of Don Zagier as well as others \cite{zagier3,lewin,zagier2,zagier1}.

In statistical mechanics the polylogarithm is used to count the average energy in a boson gas \cite{statmech}. The density function for a boson gas is given by $n(x) = x^{p-1}(e^{(x-\mu)}-1)^{-1}$. Here $n$ is the energy density of bosons at the energy level $x$. The term $\mu$ is the chemical potential energy, and is always non-positive. The average energy of particles in a boson gas is then given by $$\Gamma(s) L_s(e^{\mu}) = \int_0^\infty \frac{x^{s-1}}{e^{(x-\mu)} - 1} dx$$ when $s = 2$.

Despite the widespread use of the polylogarithm, there does not seem to be any literature studying functions of the form $\sum_{n=1}^\infty a_n \frac{z^n}{n^s}$. This stands in contrast with the study of Dirichlet series as well as power series. This paper aims to develop the theory of Hilbert spaces of polylogarithm-like series. In particular, this paper will develop some of the analytic theory of such functions and then discuss function theoretic operators on this space.

In section \ref{isometry}, we will demonstrate that the Bose-Einstein integral formula can be modified to provide an isometric isomorphism between the classical Hardy space, $H^2$, and the Polylogarithmic Hardy space. The multiplication operators over the polylogarithmic Hardy space will be investigated in section \ref{sec_multipliers}. In particular it will be shown that there are no nontrivial bounded or densely defined multiplication operators. The space $PL^2$ is then the first RKHS known for which there are only trivial densely defined multiplication operators. Section \ref{sec_toep} views $PL^2$ as a subspace of $H^2 \times \mathcal{H}^2$, and investigates Toeplitz compressions over $PL^2$. Through this study, a proof of the multiplicativity of the divisor function is found through the decomposition of a class of operators.

\section{Properties of the Polylogarithmic Hardy Space}
\label{isometry}
This section serves as motivation into the study of the Polylogarithmic Hardy space defined in the introduction. Recall that the classical Hardy space is defined to be the collection of functions analytic in the disc with square summable Taylor coefficients. The Hardy space is a reproducing kernel Hilbert space (RKHS) with kernel function $\hat k_\lambda(z) = (1-\bar \lambda z)^{-1}$, which is known as the Szeg\H{o} kernel. For a boson gas, the chemical potential energy is nonpositive, and hence $0 <e^{\mu} < 1$. If we let $\lambda = e^{\mu}$, the Bose-Einstein integral can be rewritten as follows: $$\Gamma(s) L_s(\lambda) = \int_0^\infty \frac{x^{s-1}}{e^{(x-\mu)} - 1} dx$$ $$= \int_0^\infty x^{s-1} \frac{\lambda e^{-x} }{1-\lambda e^{-x}} dx = \int_0^\infty x^{s-1} \lambda e^{-x} \hat k_\lambda(e^{-x})dx.$$ The integral can be viewed as the Mellin transform of the Szeg\H{o} kernel preceeded by multiplication by $z$ and composition with $\lambda e^{-x}$. Thus the Bose-Einstein integral can be transformed into an isometry between Hilbert spaces as follows:

\begin{theorem}\label{isothm}Let $f(z,s) = \sum_{n=1}^\infty a_n z^n n^{-s} \in PL^2$. If $g(z) = \sum_{n=0}^\infty a_{n+1} z^n$ is in $H^2$ then $$f(z,s) = \frac{1}{\Gamma(s)} \int_0^\infty x^{s-1} ze^{-x} g(ze^{-x})dx := W(g)$$ where $Re(s) > 0$. Moreover, $W$ is an isometric isomorphism between $H^2$ and $PL^2$ with inverse map $f(z,s) \mapsto S^* f(z,0)$ (where $S = M_z$ is the shift operator).\end{theorem}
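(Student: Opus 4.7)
My approach has three stages, handling the integral identity, the isometry, and the inverse formula in turn.

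First, I would verify the integral representation by a direct termwise computation. Since $g(z) = \sum_{n \ge 0} a_{n+1} z^n$, substituting $ze^{-x}$ for $z$ and multiplying by $ze^{-x}$ gives the reindexed series
\[
ze^{-x} g(ze^{-x}) = \sum_{m=1}^\infty a_m z^m e^{-mx}.
\]
Multiplying by $x^{s-1}/\Gamma(s)$ and integrating, the classical identity $\int_0^\infty x^{s-1} e^{-mx}\,dx = \Gamma(s)/m^s$ (valid for $\mathrm{Re}(s) > 0$ and $m \ge 1$) yields $\sum_{m\ge 1} a_m z^m/m^s = f(z,s)$. The interchange of sum and integral I would justify via Fubini--Tonelli: for $|z| \le r < 1$ and $\sigma = \mathrm{Re}(s) > 0$, the absolute double integral is bounded by $\Gamma(\sigma)\sum_m |a_m| r^m /m^\sigma$, which is finite under the standing summability hypothesis $\sum |a_n| < \infty$.

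Second, to prove that $W$ is a unitary $H^2 \to PL^2$, I would read off the Hilbert-space structure from the kernel. The factorization $K(z,w,s,t) = L_{s+\bar t}(z \bar w) = \sum_{n \ge 1} (z^n/n^s)\overline{(w^n/n^t)}$ forces $e_n(z,s) := z^n/n^s$ to form an orthonormal basis of $PL^2$, while $\{z^n\}_{n \ge 0}$ is the standard orthonormal basis of $H^2$. The map $W$ identified in the previous step sends the basis vector $z^n \in H^2$ to $e_{n+1} \in PL^2$, hence is a bijection between orthonormal bases and extends by continuity to a Hilbert-space isomorphism which is an isometry by construction.

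Third, to verify the inverse formula, I would evaluate the series at $s = 0$ to obtain $f(z,0) = \sum_{n \ge 1} a_n z^n \in H^2$, which is legitimate since $(a_n) \in \ell^1 \subset \ell^2$. The backward shift on $H^2$ satisfies $S^* h(z) = (h(z) - h(0))/z$, so $S^* f(z,0) = \sum_{n \ge 0} a_{n+1} z^n = g(z)$, exactly recovering $g$ from $f = W(g)$. The main technical point will be the Fubini justification of the termwise integration; everything else is bookkeeping. A secondary subtlety worth flagging is that the Hilbert norm on $PL^2$ is not spelled out in the statement, so I would note explicitly that it is the unique inner product in which the $z^n/n^s$ form an orthonormal basis, forced by the given kernel formula.
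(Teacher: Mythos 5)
Your proof is correct and follows the same route as the paper's: the heart of both arguments is the termwise integration $\int_0^\infty x^{s-1} e^{-mx}\,dx = \Gamma(s)/m^s$ applied to the reindexed series $ze^{-x}g(ze^{-x}) = \sum_{m\ge 1} a_m z^m e^{-mx}$. You are in fact more complete than the paper, which only carries out this computation (citing dominated convergence where you give the Fubini--Tonelli bound) and asserts the ``moreover'' clauses without proof; your orthonormal-basis argument for the isometry and the backward-shift verification of the inverse supply exactly the pieces the paper leaves implicit.
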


\begin{proof} The proof of the theorem can be accomplished by an application of the dominated convergence theorem and the definition of the $\Gamma$ function. Consider the integral,
$$\frac{1}{\Gamma(s)} \int_0^\infty x^{s-1}(ze^{-x})g(ze^{-x}) dx
 =  \ds \frac{1}{\Gamma(s)} \int_0^\infty x^{s-1} \sum_{n=1}^\infty a_{n} z^{n} e^{-nx} dx$$
$$ =  \ds \frac{1}{\Gamma(s)} \sum_{n=1}^\infty a_n \frac{z^n}{n^{s-1}} \int_0^\infty (nx)^{s-1} e^{-nx} dx
 =  \ds \frac{1}{\Gamma(s)} \sum_{n=1}^\infty a_n \frac{z^n}{n^s} \Gamma(s)$$
$$=\ds \sum_{n=1}^\infty a_n \frac{z^n}{n^s} = f(z,s).$$
\end{proof}

We see from Theorem \ref{isothm} that $W$ is an isometric isomorphism that maps $z^n$ to $z^{n+1}(n+1)^{-s}$ and it preserves the Taylor coefficients.

Each function in $PL^2$ is a function of two complex variables that is analytic for $|z| < 1$ and entire in $s$. Each function in $PL^2$ satisfies the relation $z \cdot \frac{\partial}{\partial z} f(z,s+1) = f(z,s)$, which is a property these functions share with the classical polylogarithm.

In addition, $PL^2$ is a RKHS with the inner product $$\langle f(z,s), g(z,s) \rangle_{PL^2} = \langle S^*f(z,0), S^* g(z,0) \rangle_{H^2},$$ for $f,g \in PL^2$, and with the kernel function $$K(z,w,s,t) = L_{s+\bar t}(z\bar w).$$ The space $PL^2$ can be determined to be a RKHS directly via Cauchy-Schwarz or by recognizing that $PL^2$ is a closed subspace of $H^2 \times \mathcal{H}^2$, the RKHS resulting from taking the tensor product of the classical Hardy space and the Hardy space of Dirichlet series.

Through its reproducing kernels, it can been seen that $PL^2$ contains subspaces with a natural identification with other RKHSs. For instance if $s,t=-1/2$ are fixed, the kernel functions become $K(z,w,-1/2,-1/2) = \sum_{n=1}^\infty n z^n \bar w^n = \tilde K(z,w,-1/2,-1/2) - 1$, where $\tilde K$ is the kernel function of the Dirichlet space, $\mathcal{D} = \{ f(z) = \sum_{n=0}^\infty a_n z^n : \sum_{n=0}^\infty n|a_n| < \infty \}$. For other values of $s$ the Bergman space and Hardy space can be found as well. When $z=1$, it can be seen that $PL^2$ also contains $\mathcal{H}^2$ as a subspace.

We now develop a lemma about the continuity of functions in $PL^2$ as well as a lemma about representations of the recipricols of functions in $PL^2$. These lemmas will be necessary for the development in Section \ref{sec_multipliers}.

\begin{lemma}\label{cn_prop}Let $z_0 \in \mathbb{D}$, $\gamma > 0$, and $f(z,s) \in PL^2$. For every $\epsilon > 0$ there is a $\delta > 0$ so that for all  $z \in B_\delta(z_0)$ and all $s$ for which $\sigma = Re(s) \ge 3/2 + \gamma$ we have $|f(z,s)-f(z_0,s)| < \epsilon$.\end{lemma}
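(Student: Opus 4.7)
The plan is to control $|f(z,s) - f(z_0,s)|$ by a single power-series estimate via Cauchy--Schwarz, using the correspondence $f \leftrightarrow g$ from Theorem \ref{isothm} (so that $\sum_{n \ge 1} |a_n|^2 = \|g\|_{H^2}^2 < \infty$) together with the elementary bound $|z^n - z_0^n| \le n|z - z_0|$ valid on the closed unit disk.

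First I would shrink $\delta$ if necessary so that $B_\delta(z_0) \subset \mathbb{D}$, e.g.\ taking $\delta < (1 - |z_0|)/2$, which ensures $|z|, |z_0| \le 1$ throughout the ball. Then I would write
\[
f(z,s) - f(z_0,s) = \sum_{n=1}^{\infty} a_n \frac{z^n - z_0^n}{n^s}
\]
and apply Cauchy--Schwarz to separate the coefficient sequence from the kernel factor:
\[
|f(z,s) - f(z_0,s)|^2 \le \Big(\sum_{n=1}^{\infty}|a_n|^2\Big)\Big(\sum_{n=1}^{\infty} \frac{|z^n - z_0^n|^2}{n^{2\sigma}}\Big).
\]
The first factor is a constant depending only on $f$. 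Using the factorization $z^n - z_0^n = (z-z_0)\sum_{j=0}^{n-1} z^j z_0^{n-1-j}$ and $|z|,|z_0| \le 1$, I bound $|z^n - z_0^n|^2 \le n^2 |z - z_0|^2$, so the second factor is dominated by
\[
|z - z_0|^2 \sum_{n=1}^{\infty} \frac{1}{n^{2\sigma - 2}} \le |z - z_0|^2 \,\zeta(1 + 2\gamma),
\]
which is finite precisely because $2\sigma - 2 \ge 1 + 2\gamma > 1$. Choosing $\delta$ smaller than $\epsilon / \big(\|g\|_{H^2}\sqrt{\zeta(1+2\gamma)}\big)$ completes the argument, and the bound is uniform in $s$ on the half-plane $\operatorname{Re}(s) \ge 3/2 + \gamma$ since we only used $\sigma \ge 3/2 + \gamma$.

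There is no real obstacle here; the only subtlety is that the factor of $n$ coming from $|z^n - z_0^n|$ consumes two powers of $n^{\sigma}$ inside the squared sum, which is exactly why the threshold $\sigma > 3/2$ (rather than the naive $\sigma > 1$) appears in the hypothesis. The strict inequality $\gamma > 0$ is what gives a summable tail $\sum 1/n^{1+2\gamma}$ and thereby uniformity in $s$.
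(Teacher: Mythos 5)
Your proposal is correct and follows essentially the same route as the paper: factor $z^n - z_0^n$ to extract a factor of $|z-z_0|$ and a factor of $n$, apply Cauchy--Schwarz against the square-summable coefficients, and bound the resulting tail by $\zeta(2\sigma-2)^{1/2} \le \zeta(1+2\gamma)^{1/2}$. The only difference is the order in which Cauchy--Schwarz and the bound $|z^n-z_0^n|\le n|z-z_0|$ are applied, which yields the identical final estimate; your explicit shrinking of $\delta$ to keep $B_\delta(z_0)\subset\mathbb{D}$ is a small point of care the paper leaves implicit.
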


\begin{proof}Fix $x_0 \in \mathbb{D}$ and $\epsilon > 0$. Now consider
$$\begin{array}{rcl}\ds |f(z,s)-f(z_0,s)|
& = & \ds \left| \sum_{k=1}^\infty a_k \frac{z^k - z_0^k}{k^s} \right|\vspace{.1in}\\
& = & \ds |z-z_0| \left| \sum_{k=1}^\infty a_k \frac{z^{k-1}z_0^0 + z^{k-2}z_0^1 + \cdots + z^0 z_0^{k-1}}{k^s} \right|\vspace{.1in}\\
& = & \ds |z-z_0| \left| \sum_{k=1}^\infty a_k \frac{1}{k^{\sigma -1 }} \right|\vspace{.1in}\\
& = & \ds |z-z_0| \left( \sum_{k=1}^\infty |a_k|^2 \right)^{1/2} \left( \sum_{k=1}^\infty \frac{1}{k^{2\sigma-2}}\right)^{1/2}\vspace{.1in}\\
& = & \ds |z-z_0| \cdot \|f\| \cdot \zeta(2\sigma - 2)^{1/2}\vspace{.1in}\end{array}.$$

For $\sigma = Re(s) \ge 3/2+\gamma$ the inequality yields $$|f(z,s) - f(z_0, s)| < |z-z_0| \cdot \|f\| \cdot \zeta(1+2\gamma)^{1/2}.$$ Since $\zeta(1+2\gamma) \neq 0$, $\delta$ may be chosen so that $0 < \delta < \epsilon \|f\|^{-1} \zeta(1+2\gamma)^{-1/2}$, and we have proved the theorem. 
\end{proof}

For the next lemma we must first recall the Weiner type theorem of Hewitt and Williamson \cite{hewitt}.

\begin{theorem}[Hewitt and Williamson]\label{hwthm} Suppose that $\sum_{n=1}^\infty |a_k| < \infty$. If the Dirichlet series $f(s) = \sum_{n=1}^\infty a_n n^{-s}$ is bounded away from zero in absolute value on $\sigma \ge 0$ then $(f(s))^{-1} = \sum_{n=1}^\infty b_n n^{-s}$ for $\sigma \ge 0$ where $\sum_{n=1}^\infty |b_n| < \infty$ and this series converges for $\sigma \ge 0$.\end{theorem}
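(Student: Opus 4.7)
The plan is to interpret the conclusion as invertibility in the commutative Banach algebra $\mathcal{W}$ of absolutely convergent Dirichlet series $\sum a_n n^{-s}$, normed by $\|f\|_{\mathcal{W}} = \sum |a_n|$ with pointwise multiplication (equivalently, Dirichlet convolution on coefficients). This puts the problem squarely in the framework of Wiener's classical $1/f$ theorem: by Gelfand theory for commutative unital Banach algebras, $f$ is invertible in $\mathcal{W}$ iff its Gelfand transform $\hat f$ is nonvanishing on the maximal ideal space $\mathfrak{M}(\mathcal{W})$, and the inverse is then automatically of the form $\sum b_n n^{-s}$ with $\sum |b_n| < \infty$. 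So the real task is to show that the hypothesis forces nonvanishing on the entire spectrum.

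First I would identify $\mathfrak{M}(\mathcal{W})$. Each $s_0$ with $\sigma_0 \geq 0$ produces a character $g \mapsto g(s_0)$, since $|n^{-s_0}| \leq 1$ makes evaluation bounded. Using unique factorization $n = \prod_p p^{\alpha_p(n)}$, the Bohr lift $n^{-s} \mapsto \prod_p z_p^{\alpha_p(n)}$ realizes $\mathcal{W}$ isometrically as the analytic Wiener algebra of absolutely convergent power series on the infinite-dimensional polydisc, whose spectrum is the closed polydisc $\overline{\mathbb{D}}^\infty$ in the product topology. Under this identification the evaluation character $\chi_{s_0}$ corresponds to evaluation of the lifted series at the point $(p^{-s_0})_p \in \overline{\mathbb{D}}^\infty$.

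The next step is to deduce nonvanishing on the full polydisc from the hypothesis. By Kronecker's simultaneous Diophantine approximation theorem applied to the $\mathbb{Q}$-linearly independent family $\{(\log p)/(2\pi) : p \text{ prime}\}$, the map $s \mapsto (p^{-s})_p$ has dense image in $\overline{\mathbb{D}}^\infty$ as $s$ ranges over the closed right half-plane. Since $\hat f$ is bounded below in absolute value on this dense set by hypothesis, continuity extends nonvanishing to the full spectrum, and Gelfand theory then supplies the inverse $1/f \in \mathcal{W}$.

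The main obstacle is the combined density-and-continuity argument in the final step: the \emph{quantitative} lower bound in the hypothesis (rather than mere nonvanishing) is exactly what is needed to rule out zeros of $\hat f$ at polydisc points lying in the closure of, but not in, the image of the half-plane. Without a uniform lower bound the extension could fail and one could only conclude that $f$ is invertible as a holomorphic function on the open half-plane, not as an element of $\mathcal{W}$.
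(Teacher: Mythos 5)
The paper does not actually prove this statement --- it quotes it from Hewitt and Williamson \cite{hewitt} --- so your argument has to stand on its own. Your framework (the Banach algebra $\mathcal{W}$ of absolutely convergent Dirichlet series, the Bohr lift to the Wiener algebra on the infinite polydisc, Gelfand theory with maximal ideal space $\overline{\mathbb{D}}^{\infty}$) is the right one, and is essentially the framework of \cite{hewitt}. The fatal step is the density claim: the image of the closed half-plane under $s \mapsto (p^{-s})_p$ is \emph{not} dense in $\overline{\mathbb{D}}^{\infty}$. Kronecker's theorem controls only the arguments: for fixed $\sigma = \mathrm{Re}(s)$, the points $(p^{-\sigma - it})_p$ are dense in the torus $T_\sigma$ of radii $(p^{-\sigma})_p$. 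The moduli, however, trace out only the one-real-parameter curve $\sigma \mapsto (p^{-\sigma})_p$ in $[0,1]^{\infty}$: no $s$, and no sequence $s_j$, satisfies $|2^{-s_j}| \to 1/2$ and $|3^{-s_j}| \to 1/2$ simultaneously, since the first forces $\sigma_j \to 1$ and the second forces $\sigma_j \to \log 2/\log 3$. Hence the closure of the set of point evaluations is only the union over $\sigma \in [0,\infty]$ of the tori $T_\sigma$ (together with the origin), a thin subset of the spectrum, and a character such as $w_2 = w_3 = 1/2$ is simply not a limit of evaluations. Your final paragraph identifies the continuity extension as the main obstacle but misdiagnoses it: the issue is not the absence of a uniform lower bound on a dense set, it is that the set is not dense.

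What is needed to close the gap is a proof that the Bohr lift $F$ is zero-free on the whole closed polydisc given only that it is bounded below on each torus $T_\sigma$. One route: truncate to finitely many primes (absolute summability makes the tail uniformly small); observe that the hypothesis forces $|a_1| = \lim_{\sigma\to\infty}|f(\sigma)| \ge \delta$, so the family $F|_{T_\sigma}$, $\sigma \in [0,\infty]$, is a homotopy through nonvanishing maps from $F|_{\mathbb{T}^k}$ to the nonzero constant $a_1$; conclude that all winding numbers of $F|_{\mathbb{T}^k}$ vanish; then apply the argument principle one variable at a time to show $F$ has no zeros in $\overline{\mathbb{D}}^k$. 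Only after that does Gelfand theory deliver $1/f \in \mathcal{W}$ as you describe. This zero-freeness step is the actual content of the Hewitt--Williamson theorem, and it is exactly what your sketch omits.
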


The result found in \cite{hewitt} is more general than the above statement, however this will suit the purposes of this paper. Recall that if $f(s)= \sum_{n=1}^\infty a_n n^{-s}$ converges for some $s$ (and $a_1 \neq 0$), then there is some half plane where $f(s)$ is nonvanishing. Moreover $f(s) \to a_1$ as $\sigma = Re(s) \to \infty$. This means when $a_1 \neq 0$ we can graduate the result of Theorem \ref{hwthm} to this nonvanishing half-plane for $f$ and not just $\sigma \ge 0$. Other facts concerning Dirichlet series can be found in \cite{apostol, hardy}.

\begin{lemma}\label{inverse} Let $K$ be a compact subset of the punctured disc $\mathbb{D}\setminus\{0\}$. If $f(z,s)=\sum_{n=1}^\infty a_n z^n n^{-s} \in PL^2$ with $a_1 \neq 0$, then there is a real number $\sigma_K$ and $\alpha_n : K \to \mathbb{C}$ for which $(f(z,s))^{-1} = \sum_{n=1}^\infty \alpha_n(z) n^{-s}$ for all $z \in K$ and $\sigma = Re(s) > \sigma_K$.\end{lemma}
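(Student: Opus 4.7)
The plan is to fix an arbitrary $z \in K$ and regard $s \mapsto f(z,s)$ as an ordinary Dirichlet series in $s$ with coefficients $b_n(z) := a_n z^n$, then apply the Hewitt--Williamson theorem. Absolute summability is immediate: $\sum_n |b_n(z)| \le \sum_n |a_n| < \infty$, with a bound independent of $z \in K$, so the only hypothesis of Theorem \ref{hwthm} that requires real work is the nonvanishing on a half-plane.

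To invoke Theorem \ref{hwthm} (shifted as in the remark after its statement, using that the leading coefficient $b_1(z) = a_1 z$ is nonzero), I need $|f(z,s)|$ bounded away from zero on some half-plane $Re(s) \ge \sigma_K$, uniformly in $z \in K$. Compactness does the work here: since $K \subset \mathbb{D} \setminus \{0\}$ is compact, $|z|$ is bounded below by some $r > 0$ on $K$, so $|b_1(z)| \ge |a_1| r > 0$; meanwhile the tail $\sum_{n \ge 2} b_n(z) n^{-s}$ is majorized by $2^{-Re(s)} \sum_n |a_n|$. Choosing $\sigma_K$ large enough to force this tail below $|a_1| r / 2$ yields $|f(z,s)| \ge |a_1| r / 2$ for all $z \in K$ and $Re(s) \ge \sigma_K$, which is what the shifted Hewitt--Williamson theorem requires.

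The shifted theorem then produces, for each $z \in K$, a representation $(f(z,s))^{-1} = \sum_{n=1}^\infty \alpha_n(z) n^{-s}$ valid on $Re(s) > \sigma_K$. The $\alpha_n(z)$ are pinned down by Dirichlet convolution against $b_n(z)$: one has $\alpha_1(z) = 1/(a_1 z)$ and recursively $\alpha_n(z) = -(a_1 z)^{-1} \sum_{d \mid n,\, d > 1} a_d z^d\, \alpha_{n/d}(z)$, so each $\alpha_n$ is a well-defined function of $z \in K$ (in fact a Laurent polynomial in $z$). I expect the only substantive step to be establishing this common nonvanishing half-plane, since that is the one place where both the compactness of $K$ away from $0$ and the hypothesis $a_1 \neq 0$ are simultaneously essential; everything else is bookkeeping for the Dirichlet convolution inverse.
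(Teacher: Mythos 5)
Your proof is correct, and the overall skeleton matches the paper's: fix $z$, treat $s \mapsto f(z,s)$ as an ordinary Dirichlet series with absolutely summable coefficients $a_n z^n$, produce a half-plane on which $|f(z,s)|$ is uniformly bounded away from zero over $K$, and invoke the shifted Hewitt--Williamson theorem. Where you genuinely diverge is in how the uniform half-plane is obtained. The paper first gets, for each fixed $z_0 \in K$, a pointwise half-plane $\sigma \ge \sigma_{z_0}$ on which $|f(z_0,s)| > \delta$ with $0 < \delta < |a_1|\min_K|z|$, then appeals to Lemma \ref{cn_prop} to argue that $\sigma_z$ can be chosen to depend continuously on $z$, so that it attains a maximum $\sigma_K$ on the compact set $K$. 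You bypass Lemma \ref{cn_prop} entirely: the triangle-inequality estimate $|f(z,s)| \ge |a_1|\,|z| - 2^{-\sigma}\sum_{n\ge 2}|a_n|$, combined with $\min_K |z| = r > 0$, hands you a single explicit $\sigma_K$ valid for all $z \in K$ at once. Your route is more elementary and, frankly, tighter at the one delicate point --- the paper's claim that ``$\sigma_z$ can be chosen so that it varies continuously'' is left unjustified in detail, whereas your uniform tail bound needs no such selection argument. What the paper's approach buys is a reason to have stated Lemma \ref{cn_prop} and a bound tied to the structure of $f$ near each point; what yours buys is an explicit $\sigma_K$ and independence from the continuity lemma. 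Your closing observation that the $\alpha_n$ are determined by the Dirichlet convolution recursion (Laurent polynomials in $z$) matches the paper's remark following the lemma, though note $\alpha_1(z) = (a_1 z)^{-1}$ is a Laurent monomial rather than a polynomial.
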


\begin{proof}Fix a real number $\delta$ for which $0 < \delta < |a_1| \min_{K}|z|$. For any fixed $z_0 \in K$, there is a half plane $\sigma \ge \sigma_{z_0}$ for which $f(z_0,s)|>\delta >0$.

By virtue of Lemma \ref{cn_prop}, $\sigma_z$ can be chosen so that it varies continuously with respect to $z$ (possibly requiring $\sigma_z \ge 3/2+\gamma$ with $\gamma > 0$ fixed). In particular, $\sigma_z$ takes a maximum value on $K$, $\sigma_K$. In every half plane $\sigma > \sigma_{K}$ the function $|f(z,s)|$ is bounded away from zero. For every $z \in \mathbb{D}$, $\sum_{n=1}^\infty |a_n z^n|<\infty$. Now apply Theorem \ref{hwthm} to obtain the representation for $(f(z,s))^{-1}$ and complete the proof.
\end{proof}

The functions $\alpha_n$ in Lemma \ref{inverse} can actually be shown to be polynomials in $z$. It can be demonstrated via multiplication by $f$ and by using the uniqueness of Dirichlet series.

\section{Multiplication Operators on $PL^2$}
\label{sec_multipliers}
In this section we consider the most basic of function theoretic operators over $PL^2$. In particular, this section investigates multiplication operators over $PL^2$. Given a function $\phi: \mathbb{D} \times \mathbb{C} \to \mathbb{C}$, the multiplication operator with symbol $\phi$ is given by $M_\phi f=\phi f$ whenever $\phi f \in PL^2$. The domain of $M_\phi$ is the set $D(M_\phi) = \{ f \in PL^2: \phi f \in PL^2\}$. Multiplication operators over RKHSs are closed operators \cite{frank}. This means $D(M_\phi)=PL^2$ iff $M_\phi$ is bounded, by the closed graph theorem.

Multiplication operators play an important role in linear systems theory, since they can be viewed as transfer functions of linear systems. They have been well studied in the context of RKHSs in particular. Densely defined multiplication operators, those multiplication operators for which $D(M_\phi)$ is dense, arise in quantum mechanics as well as in the study of RKHSs. The classification problem for densely defined multiplications operators has not been as extensively studied as the classification of bounded multiplications operators. However, the densely defined multiplications operators have been completely classified over the Hardy space \cite{sarason} as well as the Sobolev space $W^{1,2}[0,1]$ \cite{rosenfeld}.

For spaces of entire functions, there are no nonconstant bounded multiplication operators. This follows since if $\phi$ is the symbol of a bounded multiplication operator over a space of entire functions, then it is entire itself. Moreover, for every bounded multiplication operator, $\sup_{z \in \mathbb{C}} |\phi(z)| \le \| M_\phi\|$. Thus the symbol is a bounded entire function and is therefore constant. Thus the Fock space of entire functions has only constant bounded multiplication operators \cite{AgMc,zhu}. However, the Fock space does have a rich space of densely defined multiplication operators. On particularly important densely defined multiplication operator is the operator $M_z$ \cite{zhu}.

The following theorems will demonstrate that $PL^2$ has only constant bounded multipliers, and that it in fact has only constant densely defined multiplication operators. This is the first space found to have this property, and it is surprising that such a space exists that is easy to express as a series.

\begin{proposition}\label{nobdd} If $\phi$ is the symbol of a bounded multiplication operator over $PL^2$, then $\phi$ is constant.\end{proposition}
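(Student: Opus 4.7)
The plan is to pin down the symbol $\phi$ by applying $M_\phi$ to two carefully chosen elements of $PL^2$ and then exploiting the rigid structure of the space: a function in $PL^2$ is built only from monomials $z^n/n^s$ in which the power of $z$ matches the Dirichlet index.

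First, I apply $M_\phi$ to the test function $f_1(z,s) = z$, which is the basis element with $a_1=1$. Boundedness of $M_\phi$ gives $\phi \cdot z \in PL^2$, so there exist scalars $c_n$ with $\phi(z,s)\,z = \sum_{n=1}^\infty c_n z^n/n^s$. Dividing by $z$ (valid for $z \neq 0$) produces the representation $\phi(z,s) = \sum_{n=1}^\infty c_n z^{n-1}/n^s$. At this stage $\phi$ is constrained to be a polylogarithm-like series whose $z$-exponent lags its Dirichlet index by one.

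Next, I apply $M_\phi$ to the test functions $f_k(z,s) = z^k/k^s$ for each $k \ge 2$. The product works out to $\phi \cdot f_k = \sum_{n=1}^\infty c_n z^{n+k-1}/(nk)^s$, where the $n$-th summand has $z$-exponent $n+k-1$ but Dirichlet index $nk$. These two indices agree only when $(n-1)(k-1)=0$, and since $k\ge 2$ this forces $n=1$. Every summand with $n \ge 2$ therefore contributes a monomial $z^m/\ell^s$ with $m\ne\ell$, which cannot appear in a member of $PL^2$. Hence $c_n = 0$ for all $n \ge 2$, and we conclude $\phi \equiv c_1$ is constant.

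The main obstacle is justifying rigorously that individual summands with mismatched exponents cannot conspire to cancel and land back inside $PL^2$. This is exactly the point at which the embedding $PL^2 \subset H^2 \otimes \mathcal{H}^2$ noted in Section~\ref{isometry} earns its keep: the family $\{z^m \otimes \ell^{-s}\}_{m,\ell \ge 1}$ forms a Hilbert basis of the tensor product, so distinct pairs $(m,\ell)$ are linearly independent, and $PL^2$ sits as the closed subspace spanned by the diagonal pairs with $m=\ell$. An alternative, more elementary, route is to let $\operatorname{Re}(s) \to \infty$ and peel off the Dirichlet terms one index at a time using the continuity statement in Lemma~\ref{cn_prop}. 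Either route legitimizes the coefficient-matching step and completes the argument.
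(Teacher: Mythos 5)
Your proof is correct, but it takes a genuinely different route from the paper's. The paper also begins by applying $M_\phi$ to $z$ to obtain the representation $\phi(z,s)=\sum_{n\ge1} b_n z^{n-1}n^{-s}$, but then finishes analytically: for each fixed $z_0\in\mathbb{D}$ the function $s\mapsto\phi(z_0,s)$ is entire and bounded by $\|M_\phi\|$ (the standard pointwise bound for multipliers of an RKHS), so Liouville's theorem forces $\phi$ to be constant in $s$, which kills every $b_n$ with $n\ge2$. You instead finish algebraically: testing against $f_k=z^k/k^s$ for $k\ge2$ produces monomials $z^{n+k-1}/(nk)^s$ whose $z$-exponent and Dirichlet index agree only when $(n-1)(k-1)=0$, so membership of $\phi f_k$ in the diagonal subspace $PL^2\subset H^2\times\mathcal{H}^2$ forces $c_n=0$ for $n\ge2$. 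Your worry about cancellation is the right one to raise, and your resolution is sound: either the orthogonality of distinct $z^m\otimes\ell^{-s}$ in the tensor product, or uniqueness of Dirichlet series in $s$ followed by comparison of the resulting monomials in $z$, legitimizes the coefficient matching (this is exactly the kind of step the paper itself uses in Theorem~\ref{noddm}). The trade-off: the paper's Liouville argument is shorter and exploits boundedness in an essential way (it is the same mechanism that rules out bounded multipliers on the Fock space), whereas your argument uses boundedness only to guarantee that $z$ and a single $z^k/k^s$ lie in $D(M_\phi)$, so it proves a formally stronger statement and is closer in spirit to the paper's convolution-based proof of Theorem~\ref{noddm} for densely defined multipliers. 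One cosmetic point: the monomials in $H^2$ start at $z^0$, so the tensor basis should be indexed by $m\ge0$, $\ell\ge1$; this does not affect the argument.
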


\begin{proof}
Since $M_\phi$ is a bounded multiplication operator over $PL^2$ and $z \in PL^2$, a representation of $\phi$ can be obtained immediately since $\phi z \in PL^2$. This means $\phi(z,s) = \sum_{n=1}^\infty b_n \frac{z^{n-1}}{n^s}$ for some sequence $\{ b_n \}_{n\in\mathbb{N}} \in l^2$.

For any fixed $z_0 \in \mathbb{D}$, $h_{z_0}(s) = \phi(z_0,s)$ is an entire function. Moreover, $h_{z_0}(s)$ is bounded in magnitude by $\|M_\phi\|$. Thus $h_{z_0}(s)$ is a constant function for each $z_0 \in \mathbb{D}$. Thus $\phi(z,s)$ is constant with respect to $s$. Thus $b_n = 0$ for all $n \ge 1$, and $\phi(z,s)$ is constant with respect to both variables.
\end{proof}

Proposition \ref{nobdd} will be subsumed by Theorem \ref{noddm} below. The proof of Proposition \ref{nobdd} is more asthetically pleasing than than of Theorem \ref{noddm} in that it uses the analyticity of the space to arrive at a conclusion. The proof of Theorem \ref{noddm} is more algebraic in nature, relying on the multiplication properties of Dirichlet series.

\begin{theorem}\label{noddm}The only densely defined multiplication operators over $PL^2$ are the constant functions.\end{theorem}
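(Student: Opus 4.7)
The plan is to assume $M_\phi$ is densely defined and force $\phi$ to be constant, via an algebraic analysis of the Dirichlet-series structure induced by Lemma \ref{inverse}. Pick any $f_0 = \sum a_m z^m/m^s \in D(M_\phi)$ with $a_1 \neq 0$; such an $f_0$ exists since $\{f : a_1(f) = 0\}$ is a proper closed subspace of $PL^2$ while $D(M_\phi)$ is dense. By Lemma \ref{inverse}, $1/f_0(z,s) = \sum_n \alpha_n(z)/n^s$ on $K \times \{\sigma > \sigma_K\}$ for a suitable compact $K \subset \mathbb{D}\setminus\{0\}$, with $\alpha_1(z) = 1/(a_1 z)$ and (by the remark following the lemma) each $\alpha_d$ a polynomial of degree $d-2$ for $d \geq 2$. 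Writing $\phi f_0 = \sum_n c_n z^n/n^s \in PL^2$ and forming the Dirichlet product,
\[ \phi(z,s) \;=\; \sum_{N \geq 1} \gamma_N(z)/N^s, \qquad \gamma_N(z) \;=\; \sum_{d \mid N} c_{N/d}\, z^{N/d} \alpha_d(z). \]
The $d=1$ term contributes $(c_N/a_1) z^{N-1}$, and for $d \geq 2$ the term $c_{N/d} z^{N/d} \alpha_d(z)$ has degree $N/d + d - 2 \leq N - 1$ (with equality only at $d = N$), so each $\gamma_N$ is a polynomial in $z$ of degree at most $N-1$, and $\gamma_1 = c_1/a_1$ is a constant $C_0$. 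Replacing $\phi$ by $\phi - C_0$, we may assume $C_0 = 0$.

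For any $f = \sum a_m z^m/m^s \in D(M_\phi)$, write $\phi f = \sum_P c_P(f) z^P/P^s \in PL^2$. Expanding the Dirichlet product $\phi \cdot f$ and invoking uniqueness of Dirichlet series in $s$ yields, for every $P \geq 1$, the polynomial identity in $z$
\[ \sum_{nm = P} \gamma_n(z)\, a_m(f)\, z^m \;=\; c_P(f)\, z^P. \]
Writing $\gamma_n(z) = \sum_j g_{n,j} z^j$ and equating the coefficient of $z^k$ on both sides for each $k < P$ gives
\[ \sum_{m \mid P} g_{P/m,\, k-m}\, a_m(f) \;=\; 0 \qquad \text{for every } f \in D(M_\phi), \]
summed over $m$ with $0 \leq k - m \leq P/m - 1$. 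As a continuous linear functional of $f$ vanishing on the dense set $D(M_\phi)$, this functional is identically zero on $PL^2$; since the coordinate functionals $\{a_m\}$ are linearly independent (dual to the basis $\{z^m/m^s\}$), each coefficient $g_{P/m,\, k-m}$ must vanish.

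To annihilate an arbitrary $g_{N, j}$ with $N \geq 2$ and $0 \leq j \leq N - 1$, take $P = 2N$, $m = 2$, $k = j + 2$: the validity bounds $0 \leq k - m = j \leq N - 1 = P/m - 1$ hold, and $(P/m, k - m) = (N, j)$, so $g_{N, j} = 0$. Combined with the degree bound $\deg \gamma_N \leq N - 1$, this forces $\gamma_N \equiv 0$ for every $N \geq 2$, so $\phi \equiv 0$ on the Dirichlet-series domain. Since $\phi = (\phi f_0)/f_0$ is a ratio of functions analytic in $z$ and entire in $s$, it is meromorphic on its natural domain; the identity theorem extends $\phi \equiv 0$ globally, and undoing the shift shows the original $\phi$ equals the constant $C_0$.

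The main obstacle lies in the first paragraph: setting up the Dirichlet-series representation of $\phi$ via Lemma \ref{inverse} and tracking the polynomial structure and degree of each $\gamma_N$, despite $\alpha_1(z) = 1/(a_1 z)$ being only rational. Once that bookkeeping is secured and the constant $C_0$ absorbed, the remainder of the argument reduces cleanly to density of $D(M_\phi)$ combined with linear independence of the coordinate functionals $\{a_m\}$.
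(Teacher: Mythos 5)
Your proposal is correct, and it follows the paper's overall strategy --- invert a domain function via Lemma \ref{inverse}, represent $\phi$ as a Dirichlet series in $s$ with polynomial coefficients $\gamma_N(z)$, and exploit density of $D(M_\phi)$ to force those coefficients to vanish --- but the combinatorial heart is executed genuinely differently. The paper runs a strong induction on the number of prime factors of $N$: at each stage it isolates a single monomial ($z^{k'+1}$, via Remark \ref{rem_ineq}) whose coefficient $c_{k'}a_2/a_1$ visibly depends on the chosen domain function, and then compares two specific functions $f,g\in D(M_\phi)$ with $a_2/a_1\neq d_2/d_1$ to conclude that coefficient vanishes. You instead prove the uniform degree bound $\deg\gamma_N\le N-1$ once and for all (tracking $\deg\alpha_d\le d-2$ through the reciprocal series, including the rational term $\alpha_1=1/(a_1 z)$), and then observe that for each $P$ and each $k<P$ the map $f\mapsto\sum_{m\mid P}g_{P/m,\,k-m}\,a_m(f)$ is a bounded functional vanishing on the dense domain, hence identically zero, so each individual $g_{P/m,\,k-m}$ vanishes by independence of the coordinate functionals; the choice $P=2N$, $m=2$, $k=j+2$ then kills every coefficient of $\gamma_N$ in one stroke. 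This buys a cleaner, induction-free argument that sidesteps the paper's delicate exponent bookkeeping and the need to exhibit a second comparison function explicitly; the price is that you must actually verify the degree estimate on the $\alpha_d$, which the paper's remark after Lemma \ref{inverse} asserts only qualitatively --- supply the short induction from $\alpha_n=-(a_1 z)^{-1}\sum_{1<d\mid n}a_d z^d\alpha_{n/d}$ giving $\deg\alpha_n\le n-2$ for $n\ge 2$. The remaining soft spots (the final extension of $\phi\equiv C_0$ from $K\times\{\sigma>\sigma_K\}$ to all of $\mathbb{D}\times\mathbb{C}$ via the identity theorem applied to $\phi f_0$, and the absolute convergence justifying the Dirichlet product expansion for every $f$ in the domain) are shared with, and handled no less carefully than in, the paper's own proof.
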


\begin{proof}Suppose that $\phi(z,s)$ is a densely defined multiplication operator over $PL^2$. Let $K$ be a compact subset of $\mathbb{D}\setminus\{0\}$, $f(z,s) = \sum_{n=1}^\infty a_n z^n n^{-s} \in D(M_\phi)$. Further, it may be assumed that $a_1 \neq 0$, since if there were no such function with $a_1 \neq 0$, then $D(M_\phi)$ would not be dense in $PL^2$. Now let $\sigma = Re(s) \ge \sigma_K$ be the half plane described in Lemma \ref{cn_prop}. Set $h(z,s) = \phi(z,s) \cdot f(z,s) \in PL^2$. Thus $(f(z,s))^{-1} = \sum_{n=1}^\infty \alpha_n(z) n^{-s}$ in this half plane and $\phi(z,s) = h(z,s)/f(z,s)$.

In particular, $\phi(z,s) = h(z,s)/f(z,s) = \sum_{n=1}^\infty \phi_n(z) n^{-s}$ for all $z \in K$, where $\phi_n(z)$ is some function on $K$. We will demonstrate that $\phi$ is a constant function by taking advantage of the incompatability of Dirichlet convolution and the convolution of the coefficients of power functions. The proof proceeds by induction on the number of prime factors of $n$. Here it will be shown that each coefficient $\phi_n(z)$ with $n>0$ is zero. Along the way it will be established that $\phi_{2k}(z) = c_{2k}z^{2k-1}$, but $c_{2k}$ will turn out to be zero in the next step of induction.

\begin{remark}\label{rem_ineq}For all $n$ not prime, in the proof below it is essential to notice that if $k=p_1 \cdot p_2 \cdots p_m$ is the prime factorization for $n$, then $n+1 > 2 \frac{k}{p_i} + p_i - 1$. This will isolate a term in the polynomials, and the result will follow.\end{remark}

We begin the induction at $n=1$, a number with no prime factors. Using Dirichlet convolution we find that $b_1 z^1 = \phi_1(z) a_1 z^1$. Thus $\phi_1(z) = b_1/a_1 := c_1$.

For prime $n$ we begin with $n=2$. Again via Dirichlet convolution it can be seen that $b_2z^2 = \phi_2(z) a_1 z^1 + \phi_1(z) a_2 z^2$. Since $\phi_1(z)$ is a constant function, this means that $\phi_2(z) = c_2z^1$ for some $c_2$.

If we take $n=4$ we find that $$b_4 z^4 = \phi_4(z) a_1 z^1 + \phi_2(z) a_2 z^2 + \phi_1(z) a_4 z^4 = \phi_4(z) a_1 z^1 +c_2a_2z^ + c_1a_4z^4.$$ Thus $\phi_4(z) = \frac{b_4 - c_1a_4}{a_1} z^3 - \frac{c_2a_2}{a_1} z.$ However, since $M_\phi$ is densely defined there is another function $g(z,s) = \sum_{n=1}^\infty d_n z^n n^{-s}$ for which $d_1 \neq 0$ and $d_2/d_1 \neq a_2/a_1$. Using the same algorithm we would find that the $z$ coeficient is $c_2d_2d_1^{-1}$. Since $\phi_4(z)$ is a fixed polynomial, we must have $c_2d_2d_1^{-1}=c_2a_2a_1^{-1}$. Therefore we must conclude that $c_2 = 0$ and $\phi_4(z) = c_4 z^3$. In the same manner we can show that $\phi_p(z) = 0$ for every prime $p$ and $\phi_{2p}(z)=c_{2p}z^{2p-1}$. Thus $\phi_n(z) = 0$ for each $n$ with one prime factor, and $\phi_{2n}(z) = c_{2n}z^{2n-1}$.

Suppose for each $m < m_0$ and $k=p_1\cdot p_2 \cdots p_m$ (each $p_i$ is a not necessarily distinct prime) we have $\phi_k(z)=0$, and for those $k$ with $m_0 - 1$ prime factors we have $\phi_{2k}(z) = c_{2k} z^{2k-1}$.

Take $k^\prime = p_1 \cdot p_2 \cdots p_{m_0}$. Then by our induction assumption $$b_{k'} z^{k'} \phi_{k'}(z) a_1 z^1 + c_1 a_{k'}z^{k'},$$ which yields $\phi_{k'} = c_{k'}z^{k'-1}$.

Now consider the $\phi_{2k'}$. Again using our induction assumption, the only terms that remain in the convolution are those that have $m_0$ or more prime factors and of course $\phi_1(z)$. Thus after the Dirichlet convolution:
$$\begin{array}{rcl}b_{2k'}z^{2k'}&=&\phi_{2k'}(z) a_1 z^1 + \phi_{k'}a_2z^2 + \phi_{2k'/p_1}(z)a_{p_1}z^{p_1} + \cdots\\
&+&\phi_{2k'/p_{m_0}}(z)a_{p_n}z^{p_n}+\phi_1(z)a_{2k'}z^{2k'}\\
&=&\phi_{2k'}(z)a_1 z^1 + c_{k'}a_2z^{k'+1}+c_{2k'/p_1}a_{p_1}z^{(2k'/p_1)+p_1-1}+\cdots\\
&+&c_{2k'/p_{m_0}}a_{p_{m_0}}z^{(2k'/p_{m_0})+p_{m_0}-1}+c_1a_{2k'}z^{2k'}.\end{array}$$

The function $\phi_{2k'}(z)$ can be solved for which yields: $$\phi_{2k'}(z) = \frac{b_{2k'}-c_1 a_{2k'}}{a_1} z^{2k'-1} - \frac{c_{k'}a_2}{a_1}z^{k'+1} - \text{other terms.}$$

Since none of the other terms of the function $\phi_{2k'}(z)$ has the factor $z^{k'+1}$ (by the remark above), the $k'+1$ coefficient $c_{k'}a_2a_1^{-1}$ which depends on $f$. We conclude that $c_{k'} = 0$ for all $k'$ with $m_0$ prime factors. Moreover, $\phi_{2k'}(z) = c_{2k'}z^{2k'-1}$.

Therefore by strong induction we conclude that $\phi_k(z) = 0$ for all $k\neq 1$, and $\phi(z,s)$ is constant for all $z$ in the compact set $K$. This constant does not depend on the choice of $K$, so $\phi(z,s) = b_1/a_1$ for all $s \in \mathbb{C}$ and $z \in \mathbb{D}\setminus\{0\}$. Finally by continuity $\phi(0,s) = b_1/a_1$ and that completes the theorem.\end{proof}

\begin{corollary}\label{notinvariant}No multiplication operator (with nonconstant symbol) on the tensor product, $H^2 \times \mathcal{H}^2$, has $PL^2$ as an invariant subspace.\end{corollary}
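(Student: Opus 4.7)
The plan is a direct reduction to Proposition \ref{nobdd}. Suppose, toward a contradiction, that $M_\Phi$ is a bounded multiplication operator on $H^2 \times \mathcal{H}^2$ with symbol $\Phi$, and that $PL^2$ is invariant under $M_\Phi$, i.e.\ $\Phi \cdot f \in PL^2$ for every $f \in PL^2$. Since $PL^2$ was observed in Section \ref{isometry} to be a closed subspace of $H^2 \times \mathcal{H}^2$, and $M_\Phi$ is a bounded operator on the ambient space, the restriction $T := M_\Phi|_{PL^2}$ is an everywhere-defined bounded linear operator on $PL^2$ with $\|T\| \le \|M_\Phi\|$.

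Next I would identify $T$ as a bounded multiplication operator on $PL^2$ whose symbol is the same function $\Phi$. This step is essentially a tautology: every $f \in PL^2$ is, in particular, an element of $H^2 \times \mathcal{H}^2$, and $M_\Phi$ acts as pointwise multiplication by $\Phi$ on the common domain $\mathbb{D} \times \{\sigma > \sigma_0\}$ where both spaces are defined. The invariance hypothesis says exactly that the pointwise product $\Phi \cdot f$ represents an element of $PL^2$ for every $f \in PL^2$, so $\Phi$ is a symbol of a multiplication operator on $PL^2$ with $D(M_\Phi^{PL^2}) = PL^2$. By the closed graph observation in Section \ref{sec_multipliers} (or simply because $T$ inherits boundedness from $M_\Phi$), this multiplication operator is bounded on $PL^2$.

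At this point I would invoke Proposition \ref{nobdd}: any bounded multiplier on $PL^2$ has a constant symbol. Hence $\Phi$ is constant, contradicting the assumption that $M_\Phi$ had a nonconstant symbol. This completes the proof.

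The main (and really the only) thing to check is the compatibility of the two notions of ``symbol'': namely, that the function $\Phi$ which represents $M_\Phi$ as a multiplier on the larger space $H^2 \times \mathcal{H}^2$ automatically plays the same role when we restrict the operator to $PL^2$. This is immediate because multiplication is a pointwise operation on the underlying function values and does not depend on which Hilbert space norm is placed on the function. No additional regularity or density arguments are needed, which is why this falls out as a corollary rather than a theorem in its own right.
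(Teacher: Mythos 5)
Your argument is correct and is precisely the intended one: the paper states this corollary without proof immediately after Theorem \ref{noddm}, the implicit reasoning being exactly your reduction --- invariance of $PL^2$ makes the restriction an everywhere-defined (hence bounded, hence by Proposition \ref{nobdd} constant-symbol) multiplication operator on $PL^2$. Your remark that the symbol is the same pointwise multiplier in both spaces is the only point needing comment, and you handle it correctly.
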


\section{Toeplitz Compressions in the Polylogarithmic Hardy Space}
\label{sec_toep}
Corollary \ref{notinvariant} mentions that $PL^2$ is not an invariant subspace for any multiplication operator over $H^2\times \mathcal{H}^2$. The space $H^2 \times \mathcal{H}^2$ does have a rich collection of bounded multiplication operators. Multiplication of a function in $PL^2$ by a multiplication operator in $H^2 \times \mathcal{H}^2$ will often result in a function in $H^2 \times \mathcal{H}^2$ but not in $PL^2$. However, since $PL^2$ is a closed subspace of $H^2 \times \mathcal{H}^2$, the projection operator is well defined. This section presents a version of Toeplitz operators, where a multiplication operator over $H^2 \times \mathcal{H}^2$ is followed by projection to $PL^2$.

\begin{definition}Given $M_\phi$ is a bounded multiplication operator over $H^2 \times \mathcal{H}^2$, the Toeplitz operator over $PL^2$ with symbol $\phi$ is given by $T_\phi = P_{PL^2} M_\phi$.\end{definition}

\begin{proposition}The following properties hold for Toeplitz operators over $PL^2$:
\begin{enumerate}
\item $T_{z^k/m^s}$ is the zero operator iff $m-1 \not\ \mid k$. If $m-1 \mid k$ then this is a rank one operator.
\item $T_{z^k\zeta(s)}$ has rank $d(k) = \sum_{k_0 \mid k} 1$.
\item The Toeplitz operator with symbol $\phi(z,s) = \sum_{n=0, m=1}^\infty c_n z^n m^{-s}$ is compact when $\sum_{n,m} |c_{n,m}| < \infty$.
\end{enumerate}
\end{proposition}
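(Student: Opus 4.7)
My plan is to work entirely at the level of the orthonormal basis $\{z^n/n^s : n \ge 1\}$ of $PL^2$, which sits inside the natural orthonormal basis $\{z^n m^{-s} : n \ge 0,\ m \ge 1\}$ of $H^2 \times \mathcal{H}^2$ as the ``diagonal'' vectors with $n = m$. Under this identification the projection $P_{PL^2}$ is simply the coordinate projection that kills every basis vector with $n \neq m$, which gives the book-keeping needed to read off all three claims.

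For part (1), I compute directly
$$
M_{z^k/m^s}\!\left(\frac{z^n}{n^s}\right) \;=\; \frac{z^{n+k}}{(mn)^s},
$$
and after applying $P_{PL^2}$ the image survives iff $n+k = mn$, i.e., $k = (m-1)n$. For $k \ge 1$ this has a positive-integer solution $n$ precisely when $(m-1)\mid k$, and the solution $n = k/(m-1)$ is then unique. Hence $T_{z^k/m^s}$ is either zero or a rank-one operator sending $z^n/n^s$ to $z^{mn}/(mn)^s$, which proves (1).

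For part (2), I write (termwise on each basis vector, which is legitimate since only finitely many $m$ contribute to a given $z^n/n^s$) that $T_{z^k\zeta(s)} = \sum_{m \ge 1} T_{z^k/m^s}$. By (1) the nonzero summands are indexed by those $m$ with $m-1$ a positive divisor of $k$, and via $d = m-1$ these are in bijection with the divisors $d \mid k$, giving $d(k)$ rank-one contributions. I then verify that the $d(k)$ images $\{z^{k+d}/(k+d)^s : d \mid k\}$ are pairwise distinct elements of the orthonormal basis of $PL^2$ and therefore linearly independent, so the rank is exactly $d(k)$.

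For part (3), I decompose $T_\phi = \sum_{n,m} c_{n,m}\, T_{z^n/m^s}$ and bound each summand by $\|T_{z^n/m^s}\| \le \|M_{z^n/m^s}\|_{H^2\times\mathcal{H}^2} = 1$; this uses that $z^n$ and $m^{-s}$ are contractive multipliers of $H^2$ and $\mathcal{H}^2$ respectively, the latter because $\sup_{\operatorname{Re}(s)>0} m^{-\operatorname{Re}(s)} = 1$ for every $m \ge 1$. By (1) each $T_{z^n/m^s}$ has rank at most one, so every finite partial sum is finite-rank; the hypothesis $\sum_{n,m} |c_{n,m}| < \infty$ then forces norm convergence of the series, presenting $T_\phi$ as a norm limit of finite-rank operators and hence as compact. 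The main delicate point I anticipate is confirming the multiplier-norm identity $\|M_{m^{-s}}\|_{\mathcal{H}^2} = 1$ and justifying operator-norm convergence of the series defining $T_\phi$; both come down to standard facts about bounded multipliers of $\mathcal{H}^2$ combined with the absolute summability assumption.
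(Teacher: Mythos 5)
Your proposal is correct and follows essentially the same route as the paper: a basis-level computation of the projection condition $n+k=mn$ for parts (1) and (2), and for part (3) a decomposition of $T_\phi$ into finite-rank pieces whose norms are summable by the hypothesis $\sum_{n,m}|c_{n,m}|<\infty$. The only cosmetic difference is that you split $T_\phi$ fully termwise and bound each $\|T_{z^n/m^s}\|\le 1$ by the multiplier norm, whereas the paper groups the terms by powers of $z$ and bounds each group via Cauchy--Schwarz; both yield the same absolute-convergence conclusion.
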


\begin{proof}
For the first part, let $f(z,s)=\sum_{n=1}^\infty a_n z^n n^{-s} \in PL^2$, $k \in \mathbb{N}\cup \{0\}$, $m \in \mathbb{N}$. Consider the product $\frac{z^k}{m^s} f(z,s)= \sum_{n=1}^\infty a_n \frac{z^{n+k}}{(mn)^s}$. The only terms that will survive the projection will be those for which $n+k=mn$ and so $n(m-1) = k$. Therefore if $m-1 \not\ \mid k$, $T_{z^k/m^s}$ is the zero operator. If $m-1 \mid k$, then $T_{z^k/m^s} \frac{z^{n}}{n^s} = \frac{z^{n+k}}{(n+k)^s}$ if $n = k/(m-1)$ and the result is zero otherwise.

Determining the rank of $T_{z^k\zeta(s)}$ is straightforward given the observations of the last paragraph. Considering the basis functions for $PL^2$, $\{ z^n/n^s\}$ only those for which $n = k/(m-1)$ for some $m \in \mathbb{N}$ will not be sent to zero. Each surviving basis function will be mapped to a distinct basis function, $z^{(n+k)}{(n+k)^s}$. Therefore there is a contribution to the rank for each divisor $m-1 \mid k$, of which there are $d(k)$.

Finally, given $\phi(z,s) = \sum_{n=0,m=1}^\infty c_{n,m} z^n m^{-s}$, the compactness of $T_\phi$ can be expressed by writing $T_\phi$ as a limit of finite rank operators. From the discussion above it can be seen that given $\phi_{n_0} = \sum_{m=1}^\infty c_{n_0,m}z^{n_0}m^{-s}$ has rank at most $d(n_0)$. We can decompose the function $\phi$ as $\phi = \sum_{n=0}^\infty \phi_n$. The goal is to show that $T_\phi = \sum_{n=0}^\infty T_{\phi_n}$.

First note that $$T_{\phi_{n_0}} f(z,s) = \sum_{m-1 \mid n} c_{n_0,m} a_{n_0/(m-1)} \frac{z^{n_0 m/(m-1)}}{(n_0m/(m-1))^s}.$$ An application of Cauchy-Schwarz leads to the conclusion that $$\| T_{\phi_{n_0}} \| \le \sqrt{ \sum_{m-1 \mid n_0} |c_{n_0,m}|^2} \le \sum_{m-1 \mid n_0} |c_{n_0,m}| := C_{n_0}.$$

Now we can see that $\sum_{n=0}^\infty C_{n} \le \sum_{n=0,m=1}^\infty |c_{n,m}| < \infty$. Thus the series $\sum_{n=0}^\infty T_{\phi_{n}}$ is absolutely convergent and hence convergent. The final verification amounts to checking that both $\sum_{n=0}^\infty T_{\phi_n}$ and $T_\phi$ act identically on the orthonormal basis $z^n/n^s$.
\end{proof}

The appearance of the arithmetic function $d(k)$ in the rank of the operator $T_{z^k \zeta(s)}$ is suggestive of the structure of the operator. The function $d(k)$ has the multiplicative property: $d(k_1 k_2) = d(k_1) d(k_2)$ whenever $gcd(k_1, k_2) = 1$. Therefore from number theory we can see that $\rank(T_{z^{k_1 k_2}\zeta(s)}) = \rank(T_{z^{k_1}\zeta(s)}) \rank(T_{z^{k_2}\zeta(s)})$ when $k_1$ and $k_2$ are coprime. Theorem \ref{decomp} below demonstrates this through an explicit decomposition of $T_{z^{k_1k_2}\zeta(s)}$ and thereby proves independently the multiplicativity of $d(k)$.

First we give a couple of examples. Consider the matrix representation of $T_{z^2\zeta(s)}$, $T_{z^3\zeta(s)}$, $T_{z^4\zeta(s)}$, and $T_{z^6\zeta(s)}$:
$$\left(\begin{array}{cc}
0&0\\
0&0\\
1&0\\
0&1\end{array}\right),
\left(\begin{array}{ccc}
0&0&0\\
0&0&0\\
0&0&0\\
1&0&0\\
0&0&0\\
0&0&1\end{array}\right),
\left(\begin{array}{cccc}
0&0&0&0\\
0&0&0&0\\
0&0&0&0\\
0&0&0&0\\
1&0&0&0\\
0&1&0&0\\
0&0&0&0\\
0&0&0&1\end{array}\right), \text{ and}
\left( \begin{array}{cccccc}
0&0&0&0&0&0\\
0&0&0&0&0&0\\
0&0&0&0&0&0\\
0&0&0&0&0&0\\
0&0&0&0&0&0\\
0&0&0&0&0&0\\

1&0&0&0&0&0\\
0&1&0&0&0&0\\
0&0&1&0&0&0\\
0&0&0&0&0&0\\
0&0&0&0&0&0\\
0&0&0&0&0&1\end{array}\right)$$
respectively. The lower half of each matrix can be thought of as the divisibility matrix of $k$ in $T_{z^k\zeta(s)}$. That is, diagonal entry, $i$, is 1 if $i \mid k$ and zero otherwise. We can see that the matrix representation of $T_{z^2\zeta(s)}$ appears in that of $T_{z^6\zeta(s)}$. In fact, it appears twice. Once as itself identically. The second appearance is a stretched version of the matrix (scaled by 3). This motivates Theorem \ref{decomp}.

There are two natural shift operators in $PL^2$. The first comes from the Hardy space, for each $m \in \mathbb{N}$, $z^{n}/n^s \mapsto z^{n+m}/(n+m)^s$ which will be denoted $S_{+m}$. The second comes from the Hardy space of Dirichlet series, for each $m$ there is a shift operator of the form $z^{n}/n^s \mapsto z^{mn}/(mn)^s$ which will be denoted $S_{\times m}$. In the case of the Hardy space the shift operator arises from multiplication by the independent variable. In the Hardy space of Dirichlet series, these shift operators arise from multiplication by $m^{-s}$ \cite{Olofsson}. Of course, by way of Theorem \ref{noddm}, the shift operators do not have corresponding multiplication operators in $PL^2$. 

\begin{theorem}\label{decomp}When $n$ and $m$ are coprime, the operator $T_{z^{nm} \zeta(s)}$ can be decomposed as follows: $$T_{z^{nm}\zeta(s)} = \bigoplus_{k | m} S_{+(m+n)} S^{*}_{+kn} S_{\times k} T_{z^n \zeta(s)} S^{*}_{\times k} P_k$$ where $P_k$ is the projection onto the span of the vectors $z^{k\cdot r}/(k\cdot r)^s$ for $r \mid n$.\end{theorem}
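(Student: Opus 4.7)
The plan is to verify the decomposition by evaluating both sides on the orthonormal basis $\{z^j/j^s\}_{j\ge 1}$ of $PL^2$; since every factor on the right is bounded ($P_k$ a projection, the shifts (partial) isometries, $T_{z^n\zeta(s)}$ finite rank), this reduces the operator equality to a basis-by-basis check. The central computational input is the explicit formula $T_{z^\ell \zeta(s)}(z^j/j^s) = z^{j+\ell}/(j+\ell)^s$ when $j \mid \ell$ and zero otherwise, which I would read off from the preceding proposition (one Dirichlet convolution step). With $\ell = nm$, the left-hand side is supported exactly on basis vectors indexed by the divisors of $nm$.

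The combinatorial engine of the proof is the coprime divisor bijection: because $\gcd(n,m) = 1$, every $j \mid nm$ factors uniquely as $j = kr$ with $k \mid m$ and $r \mid n$. This is what justifies indexing the direct sum over $k \mid m$ and defining $P_k$ as the projection onto $\vspan\{z^{kr}/(kr)^s : r \mid n\}$: for different $k \mid m$, these subspaces are spanned by disjoint basis vectors and are hence pairwise orthogonal, so the summands act on orthogonal input subspaces and the direct sum is genuine at the domain.

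Next I would trace the action of the $k$-th summand on $z^{kr}/(kr)^s$ right-to-left through the composition. Reading off the action on basis vectors gives $z^{kr}/(kr)^s \mapsto z^r/r^s \mapsto z^{n+r}/(n+r)^s \mapsto z^{kn+kr}/(kn+kr)^s$ under $P_k$, $S^*_{\times k}$, $T_{z^n\zeta(s)}$ (using $r \mid n$), and $S_{\times k}$; the remaining additive shifts then move the index from $kn+kr$ to $nm+kr$, which matches $T_{z^{nm}\zeta(s)}(z^{kr}/(kr)^s) = z^{nm+kr}/(nm+kr)^s$. On any basis vector not of the form $z^{kr}/(kr)^s$ with $r \mid n$, the projection $P_k$ annihilates the input. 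Hence the $k$-th summand is exactly the restriction of $T_{z^{nm}\zeta(s)}$ to the block $\vspan\{z^{kr}/(kr)^s : r \mid n\}$.

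To conclude, I would verify orthogonality on the output side so that the right-hand side is a bona fide operator direct sum. Again the coprime bijection does the work: as $(k,r)$ ranges over $\{k : k \mid m\} \times \{r : r \mid n\}$, the output indices $nm + kr$ are pairwise distinct, so the ranges of different summands are orthogonal; summing over $k \mid m$ then reassembles $T_{z^{nm}\zeta(s)}$ on all of its support. The only real difficulty here is bookkeeping, since five composed operators with partial-isometry shift identities invite index errors; I would therefore organize the write-up around a single displayed chain of equalities following one basis vector through the composition, with the coprime-divisor bijection isolated as the combinatorial fact from which both input and output orthogonality follow. As an immediate corollary, ranks multiply across the direct sum to yield $\rank T_{z^{nm}\zeta(s)} = d(n)\,d(m)$, giving the independent proof of the multiplicativity of $d$ promised in the preceding discussion.
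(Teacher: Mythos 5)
Your proposal takes essentially the same route as the paper: the paper's proof consists only of verifying that the $P_k$ project onto pairwise orthogonal subspaces (exactly your coprime divisor bijection) and then asserting that the rest ``amounts to checking the operators' action on the basis functions,'' which is precisely the computation you carry out in detail. One caution: with the operators as literally written, the additive shifts $S_{+(m+n)}S^{*}_{+kn}$ send the index $kn+kr$ to $kr+(m+n)$, not to $kr+nm$ as you assert; the displayed formula evidently intends $S_{+(nm)}$ in place of $S_{+(m+n)}$, and writing out your single displayed chain of equalities would surface that typo in the statement.
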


\begin{proof}It is sufficient to check that $P_k$ and $P_{k'}$ are projections on orthogonal spaces. The rest of the theorem amounts to checking the operators' action on the basis functions.

If there was a basis function $e_{n_0} = z^{n_0}/n_0^s$ for which $P_ke_{n_0} = P_{k'}e_{n_0} = e_{n_0}$ then by definition $n_0 = k r = k' r'$ for some $r, r' \mid n$. However, since $gcd(k,n) = gcd(k',n) = 1$, we have $k' \mid k$ and $k \mid k'$. Therefore $k = k'$. Thus $P_k$ and $P_{k'}$ are projections on the same subspace iff $k=k'$. 
\end{proof}

\begin{corollary}When $n$ and $m$ are coprime, $d(nm)=d(n)d(m)$.\end{corollary}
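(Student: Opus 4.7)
The plan is to convert the operator identity in Theorem \ref{decomp} into a numerical identity by taking ranks on both sides, using Part 2 of the preceding Proposition to translate operator ranks into divisor counts. Specifically, that Proposition gives $\rank(T_{z^{nm}\zeta(s)}) = d(nm)$ and $\rank(T_{z^n\zeta(s)}) = d(n)$, so the identity $d(nm) = d(n)d(m)$ reduces to showing that the rank of the right-hand side of Theorem \ref{decomp} equals $d(m) \cdot d(n)$.

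To compute the rank of the right-hand side, I would first argue that the decomposition is a genuine orthogonal direct sum: the proof of Theorem \ref{decomp} shows that the projections $P_k$ for distinct divisors $k, k'$ of $m$ have orthogonal ranges, since a common basis vector $z^{n_0}/n_0^s$ would force $n_0 = kr = k'r'$ with $r, r' \mid n$ and then the coprimality of each of $k, k'$ with $n$ would force $k = k'$. Hence the rank of the right-hand side equals the sum of the ranks of the $d(m)$ summands, one for each divisor $k \mid m$.

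The main task is to verify that each summand $S_{+(m+n)} S^{*}_{+kn} S_{\times k} T_{z^n \zeta(s)} S^{*}_{\times k} P_k$ has rank exactly $d(n)$. I would trace basis elements through the composition: $P_k$ has $d(n)$-dimensional range $\vspan\{z^{kr}/(kr)^s : r \mid n\}$; next $S^*_{\times k}$ acts as an isometry on this range, mapping it onto $\vspan\{z^r/r^s : r \mid n\}$; then $T_{z^n\zeta(s)}$ is injective on this subspace (by Part 2 of the Proposition again, it is essentially an isometry onto $\vspan\{z^{r+n}/(r+n)^s : r \mid n\}$); and the remaining shifts $S_{\times k}$, $S^*_{+kn}$, $S_{+(m+n)}$ each act isometrically on the image they receive. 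Hence no dimension is lost along the chain, and each summand contributes exactly $d(n)$ to the rank.

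Assembling these pieces yields $d(nm) = \sum_{k \mid m} d(n) = d(m) \cdot d(n)$, which is the desired multiplicativity. The main obstacle is the bookkeeping in the previous paragraph---confirming that the chain of partial isometries remains injective on the image of each preceding operator---but this is a routine verification on the basis $\{z^n/n^s\}$ using the explicit formulas for the shift operators given just before Theorem \ref{decomp}.
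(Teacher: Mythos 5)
Your proposal is correct and follows essentially the same route as the paper: take ranks on both sides of the decomposition in Theorem \ref{decomp}, use the orthogonality of the $P_k$ to turn the rank of the direct sum into a sum of ranks, and observe that each summand has rank $\rank(T_{z^n\zeta(s)}) = d(n)$, giving $d(nm) = \sum_{k\mid m} d(n) = d(m)d(n)$. The only difference is that you spell out the basis-tracing verification that the chain of partial isometries preserves rank, which the paper leaves implicit.
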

\begin{proof}$$\begin{array}{rcl} \rank(T_{z^{nm}\zeta(s)})
&=& \sum_{k \mid m} \rank( S_{+(m+n)} S^{*}_{+kn} S_{\times k} T_{z^n \zeta(s)} S^{*}_{\times k} P_k)\\
&=&  \sum_{k\mid m} \rank(T_{z^n \zeta(s)}) = \sum_{k\mid m} d(n) = d(m) d(n).\end{array}$$\end{proof}

\bibliography{polylogarithm}{}
\bibliographystyle{plain}
\end{document}